\newcommand{\gl}{{\mathfrak g \mathfrak l}}
\renewcommand{\u}{{\mathfrak u}}
\newcommand{\g}{{\mathfrak g}}         
\newcommand{\h}{{\mathfrak h}}         
\newcommand{\cx}{{\mathbb C}}
\newcommand{\Ad}{\operatorname{Ad}}
\newcommand{\Hom}{\operatorname{Hom}}
\newcommand{\End}{\operatorname{End}}
\newcommand{\Aut}{\operatorname{Aut}}
\newcommand{\rank}{\operatorname{rank}}
\newcommand{\Jac}{\operatorname{Jac}}
\newcommand{\Mor}{\operatorname{Mor}}
\newcommand{\Mat}{\operatorname{Mat}}
\newcommand{\ol}{\overline}
\numberwithin{equation}{section}
\newtheorem{theorem}{Theorem}[section]
\newcounter{mtheorem}
\newtheorem{mtheorem}[mtheorem]{Theorem}
\newtheorem{lemma}[theorem]{Lemma}
\newtheorem{proposition}[theorem]{Proposition}
\theoremstyle{remark}
\newtheorem{remark}[theorem]{Remark}
\newtheorem{definition}[theorem]{Definition}
\newtheorem{example}[theorem]{Example}
\newcommand{\oP}{{\mathbb{P}}}
\newcommand{\sH}{{\mathcal{H}}}
\newcommand{\sK}{{\mathcal{K}}}
\newcommand{\sM}{{\mathcal{M}}}   
\newcommand{\sO}{{\mathcal{O}}}
\newcommand{\fH}{{\mathfrak{h}}}
\newcommand{\fZ}{{\mathfrak{z}}}
\begin{document}

\title{Invariant hypercomplex structures and algebraic curves}
\author{Roger Bielawski}
\address{Institut f\"ur Differentialgeometrie,
Leibniz Universit\"at Hannover,
Welfengarten 1, 30167 Hannover, Germany}



\begin{abstract}  We show that $U(k)$-invariant hypercomplex structures on (open subsets) of regular semisimple adjoint orbits in $\gl(k,\cx)$ correspond  to algebraic curves $C$ of genus $(k-1)^2$, equipped with a flat projection $\pi:C\to\oP^1$ of degree $k$, and  an antiholomorphic involution  $\sigma:C\to C$ covering the antipodal map on $\oP^1$.                            
\end{abstract}

\maketitle

\thispagestyle{empty}

Invariant hyperk\"ahler metrics on adjoint orbits of a reductive complex Lie group $G^\cx$ have been constructed by Kronheimer \cite{Kron1,Kron2} in the case of nilpotent and semisimple orbits, and by Biquard \cite{Biq} and Kovalev \cite{Kov} for arbitrary orbits. In the case of regular orbits, the metrics constructed by these authors are parametrised by triples $(\tau_1,\tau_2,\tau_3)$ of elements of a Cartan subalgebra. 
For a generic $\zeta\in \oP^1$, this hyperk\"ahler manifold $M(\tau_1,\tau_2,\tau_3)$ is isomorphic, as a complex symplectic manifold, to the regular adjoint $G^\cx$-orbit of an $l_\zeta\in \g^\cx $, the semisimple part of which equals  $(\tau_2+i\tau_3)+2i\tau_1\zeta +(\tau_2-i\tau_3)\zeta^2$. D'Amorim Santa-Cruz \cite{S-C} constructed a much larger class of $G$-invariant pseudo-hyperk\"ahler metrics on (open subsets of) regular adjoint orbits, parametrised by an arbitrary real spectral curve $S$, i.e.\ a real section of $\bigl(\g^\cx\otimes\sO_{\oP^1}(2)\bigr)/G^\cx$. This manifold $M(S)$ has several connected components; in particular there is always a component where the metric is positive-definite, and a component  where the metric is negative-definite. A rather surprising (at least to the authors) result in \cite{BF} is that already for $G^\cx=SL(3,\cx)$ there exist components on which the metric is indefinite. 
\par
Twistor theory implies almost immediately that any $G$-invariant and locally $G^\cx$-homogeneous 
pseudo-hyperk\"ahler manifold can be obtained from the  D'Amorim Santa-Cruz construction. The main purpose of this article is to show that this is no longer the case for  hypercomplex manifolds. Let $Q$ denote the $2$-sphere of complex structures defining the hypercomplex structure of a hypercomplex manifold. We prove:
\begin{mtheorem} Let $(M,Q)$ be a connected hypercomplex manifold with a free triholomorphic action of $PU(k)$ and such that:
\begin{itemize}
\item[(i)] for any complex structure in $Q$, the local action of $PGL(k,\cx)$ is transitive with the infinitesimal stabiliser isomorphic to the centraliser of a regular element;
\item[(ii)] for a generic complex structure in $Q$, the stabiliser in (i) is reductive (i.e. a Cartan subalgebra).
\end{itemize}
Then there exists a connected reduced lci algebraic curve $C$ of (arithmetic) genus $g=(k-1)^2$, equipped with a  flat projection $\pi:C\to\oP^1$ of degree $k$ and  an antiholomorphic involution $\sigma$ covering the antipodal map on $\oP^1$, such that $M/PU(k)$ is canonically isomorphic to an open subset of 
$\bigl(\Jac^{g-1}(C)\backslash(\Theta\cup\Delta)\bigr)^\sigma$, where $\Theta$ is the theta divisor and $\Delta$ is the divisor of invertible sheaves $L$ of degree $g-1$ such that the shifted Petri map $$H^0(C,L(1))\otimes H^0(C,L^\ast(1)\otimes K_C)\to H^0(C,K_C(2)),\enskip \text{where $L(i)=L\otimes\pi^\ast\sO_{\oP^1}(i)$},$$ is not an isomorphism. The action of $\sigma$ on $\Jac^{g-1}(C)$ is given by $L\mapsto \bigl(\ol{\sigma^\ast L}\bigr)^\ast\otimes K_C$.
\par
Conversely, given $\pi:C\to\oP^1$ as above, there exists a canonical $PU(k)$-invariant hypercomplex structure, satisfying (i)-(ii), 
on  a principal $PU(k)$-bundle over an open subset of 
$\bigl(\Jac^{g-1}(C)\backslash(\Theta\cup\Delta)\bigr)^\sigma$.
\end{mtheorem}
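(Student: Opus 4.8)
The plan is to prove both implications through the twistor space of $(M,Q)$ and the spectral correspondence of Beauville--Narasimhan--Ramanan (BNR) for the ruled surface $\operatorname{Tot}(\sO_{\oP^1}(2))$. First I would form the twistor space $p\colon Z\to\oP^1$, with fibre $Z_\zeta=(M,I_\zeta)$. The triholomorphic $PU(k)$-action preserves each $I_\zeta$, hence acts on $Z$ fibrewise, and its complexification gives a local holomorphic $PGL(k,\cx)$-action; by (i) each fibre is an open subset of a single orbit $PGL(k,\cx)/Z(l_\zeta)$ with $l_\zeta$ regular, and by (ii) $l_\zeta$ is regular semisimple for generic $\zeta$.

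I would then construct the spectral curve. The regular element $l_\zeta$ is recovered from $I_\zeta$: under the surjection $\ssl(k,\cx)\to T_m Z_\zeta$ with kernel the centraliser $\mathfrak z(l_\zeta)$, the complex structure $I_\zeta$ descends to the complex structure of the regular orbit, which is $\ad_{l_\zeta}$ up to normalisation, so $I_\zeta$ determines $l_\zeta$ up to centre. Restricting the tautological matrix $A=g\,l_\zeta\,g^{-1}$ to a twistor line, whose normal bundle is $\sO(1)^{\oplus k(k-1)}$, forces $A$ to be a section of $\gl(k,\cx)\otimes p^\ast\sO(2)$, i.e.\ quadratic in $\zeta$. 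Its characteristic polynomial $\det(\eta-A)=\eta^k+c_1(\zeta)\eta^{k-1}+\dots+c_k(\zeta)$ then has $c_j\in H^0(\oP^1,\sO(2j))$ independent of the point of $M$, and cuts out a Cartier divisor $C\subset\operatorname{Tot}(\sO_{\oP^1}(2))$, finite flat of degree $k$ over $\oP^1$, reduced and lci by separability from (ii); adjunction gives $p_a(C)=1-\chi(\sO_C)=(k-1)^2=g$. The antiholomorphic involution $\tau_Z$ of $Z$ covering the antipodal map induces $\sigma\colon C\to C$ covering the antipode on $\oP^1$.

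Next I would identify $M/PU(k)$ with real points of $\Jac^{g-1}(C)$. A complex twistor section $s$ assigns to each $\zeta$ a matrix $A_s(\zeta)$ with characteristic polynomial $f_C$, i.e.\ a Higgs pair $(\sO_{\oP^1}^{\oplus k},A_s)$ on $\oP^1$ with spectral curve $C$; by BNR this is a line bundle $L(1)=L\otimes\pi^\ast\sO(1)$ on $C$ with $A_s=\pi_\ast(\eta\cdot)$ and $\sO_{\oP^1}^{\oplus k}=\pi_\ast(L(1))$, so sections modulo the complexified group are exactly isomorphism classes of $L$. Triviality $\pi_\ast(L(1))\cong\sO_{\oP^1}^{\oplus k}$ forces $\deg L=g-1$ and, via $h^0(L)=h^1(L)=0$, is equivalent to $L\notin\Theta$; regularity of $A_s(\zeta)$ for \emph{every} $\zeta$, required by (i), is equivalent to the spectral sheaf being a line bundle everywhere, which is exactly the condition that the shifted Petri map be an isomorphism, i.e.\ $L\notin\Delta$. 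Finally $\tau_Z$ sends a section to its Hermitian antipode $A_s(\zeta)\mapsto A_s(-1/\bar\zeta)^\ast$; under BNR the transpose Higgs field corresponds to the Serre-dual bundle $L^\ast\otimes K_C$ and the conjugate antipode to $\ol{\sigma^\ast L}$, yielding the action $L\mapsto(\ol{\sigma^\ast L})^\ast\otimes K_C$ and the isomorphism of $M/PU(k)$ with an open subset of $\bigl(\Jac^{g-1}(C)\backslash(\Theta\cup\Delta)\bigr)^\sigma$.

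For the converse I would run this backwards: the relative BNR correspondence over $\oP^1$ produces from each $L\notin\Theta\cup\Delta$ a matrix polynomial $A_L(\zeta)$, and assembling the orbits of the $A_L(\zeta)$ builds a complex manifold $Z\to\oP^1$ with $\sO(1)$-lines; the involution induced by $\sigma$ supplies a real structure, and Obata's theorem reconstructs a unique $PU(k)$-invariant hypercomplex structure satisfying (i)--(ii) on the $PGL(k,\cx)$-frame bundle, whose $PU(k)$-quotient is the stated open subset. The main obstacle is the weight computation of the second paragraph: from the \emph{abstract} hypercomplex and group data one sees a priori only the local orbit structure and the subalgebra $\cx[l_\zeta]$, so the crux is to globalise $l_\zeta$ into an $\sO(2)$-valued field over all of $\oP^1$ with the correct degree. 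It is precisely this $\sO(2)$-weight, extracted from the $\sO(1)$ normal bundle of the twistor lines together with $PGL(k,\cx)$-equivariance, that pins down both the genus $(k-1)^2$ and the exact divisors $\Theta$ and $\Delta$.
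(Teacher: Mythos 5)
Your first paragraph (twistor space, fibrewise local orbits) matches the paper's starting point, but the crux of your argument --- globalising $l_\zeta$ into a section $A$ of $\gl(k,\cx)\otimes\sO_{\oP^1}(2)$ and cutting the spectral curve $C$ out of $\operatorname{Tot}(\sO_{\oP^1}(2))$ by its characteristic polynomial --- is precisely the step that fails, and its failure is the main point of the paper. Assumption (i) hands you only the family of infinitesimal stabilisers $\fZ_{m,\zeta}\subset\gl(k,\cx)$, not a distinguished regular element: every regular element of the commutative algebra $\fZ_{m,\zeta}=\cx[l_\zeta]$ has the same centraliser, so $I_\zeta$ determines $l_\zeta$ at best up to an arbitrary regular element of that $k$-dimensional algebra, not ``up to centre''; correspondingly, the equivariant biholomorphism of $(M,I_\zeta)$ with an adjoint orbit is not canonical (the paper states this explicitly --- it becomes canonical only when a complex symplectic form, i.e.\ a moment map, is present). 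A globally defined $\sO(2)$-valued $A(\zeta)$ would be exactly a hypercomplex moment map, and the paper's thesis is that its existence forces the manifold to be pseudo-hyperk\"ahler (the D'Amorim Santa-Cruz case); indeed for $k\geq 3$ a generic curve of genus $(k-1)^2$ with a flat degree-$k$ projection has $h^0(C,\pi^\ast\sO_{\oP^1}(2))=3$ and therefore admits no embedding into $T\oP^1$ at all. The ``weight computation'' you flag as the main obstacle is thus not merely unproved but false in general. What the paper does instead is to assemble the stabilisers themselves into the subbundle $\fZ_m=\Ker\bigl(\gl(k,\cx)\otimes\sO_{\oP^1}\to\sO(1)\otimes\cx^{k^2-k}\bigr)$, of rank $k$ and degree $k-k^2$, observe that it is a sheaf of $k$-dimensional commutative algebras, and define $C$ as its $\operatorname{Spec}$ --- an \emph{abstract} curve with a flat projection, whose genus $(k-1)^2$ comes from $\chi(\fZ_m)$ rather than adjunction in a ruled surface; the line bundle $L_m\in\Jac^{g-1}(C)\backslash\Theta$ then arises from the $\fZ_m$-module structure on $\sO_{\oP^1}(-1)^{\oplus k}$ (a BNR-type correspondence that needs no ambient surface), and reality follows from $\fZ^T\leftrightarrow L^\ast\otimes K_C$.

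A second genuine error: you claim that regularity of $A_s(\zeta)$ for every $\zeta$ (equivalently, invertibility of the spectral sheaf) is ``exactly'' the condition that the shifted Petri map be an isomorphism. It is not: invertibility corresponds to mere regularity, whereas the Petri isomorphism is the translation of \emph{strong} regularity (Proposition \ref{F**}); the paper's theta-characteristic example shows the Petri map can degenerate while $L$ is still a line bundle. In the paper, avoidance of $\Delta$ is instead deduced from the hypothesis that $M$ is hypercomplex (the Kronecker map $\alpha$ is an isomorphism), after the map $\Psi:M\to\bar M(C,\pi)$ is shown injective ($L_m\simeq L_{m'}$ iff $m,m'$ lie on one $U(k)$-orbit, using (i) and normaliser considerations at a generic $\zeta$) and open via holomorphic extension to $M^\cx$. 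Your converse inherits the same ambient-surface defect: relative BNR returns matrix polynomials $A_L(\zeta)$ only when $C\subset\operatorname{Tot}(\sO_{\oP^1}(2))$, which generically fails here; the paper's converse is the explicit integrable $2$-Kronecker structure on $F_\ast^{g+k-1}(C)\simeq\Mor_\ast^p(C,\oP^{k-1})$ with $E\rvert_\phi=H^0\bigl(C,\phi^\ast T_{\oP^{k-1}}\otimes\sO_C(-1)\bigr)$, made hypercomplex by the Petri condition and made quaternionic by the hermitian form $\langle s,t\rangle=\gamma(s\,\sigma^\ast t)$, with no appeal to Higgs fields or to Obata's theorem.
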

The (real) dimension of the moduli space of such $(C,\pi,\sigma)$ is $2k^2-2k$.
The curves in the D'Amorim Santa-Cruz construction are embeddable in $T\oP^1$, and therefore satisfy $h^0(C,\pi^\ast\sO_{\oP^1}(2))\geq 4$. However, a generic curve of genus $(k-1)^2$  with a flat projection of degree $k$ to $\oP^1$ has $h^0(C,\sO(2))=3$ if $k\geq3$. Therefore we can conclude that for $k>2$ a generic $U(k)$-invariant locally $GL(k,\cx)$-homogeneous hypercomplex manifold is not pseudo-hyperk\"ahler. In fact, these hypercomplex manifolds cannot even admit a hypercomplex moment map: as soon as they do, they must be  pseudo-hyperk\"ahler. 
\par
We also remark that a canonical hypercomplex structure exists on a principal bundle over any connected component $\bigl(\Jac^{g-1}(C)\backslash(\Theta\cup\Delta)\bigr)^\sigma$. The structure group (and hence the symmetry group of the hypercomplex structure) will, however, usually vary between different components of $\bigl(\Jac^{g-1}(C)\backslash\Theta\bigr)^\sigma$ (among different real forms of $PGL(k,\cx)$).

\section{Background material}

\subsection{Hypercomplex structures and generalisations}

A hypercomplex structure on a smooth manifold $M$ is given by a subbundle of $\End(TM)$ which is isomorphic to quaternions and the almost complex structures corresponding to unit quaternions are all integrable. Equivalently, a hypercomplex structure is a decomposition $T^\cx M\simeq E\otimes \cx^2$ of the complexified tangent bundle into the trivial rank $2$ bundle and a quaternionic bundle $E$. Moreover the tensor product of the quaternionic structure on $E$ and the standard quaternionic structure on $\cx^2$ ($(z,w)\mapsto (-\bar w, \bar z)$ is the complex conjugation on $T^\cx M$. A (pseudo)-hyperk\"ahler manifold is a hypercomplex manifold equipped with a compatible (pseudo)-Riemannian metric. Both hypercomplex and
hyperk\"ahler manifold arise via twistor theory as spaces of $\sigma$-invariant (known as real) sections of a holomorphic submersion 
$\pi:Z\to \oP^1$ with normal bundle $\bigoplus \sO(1)$, where $\sigma:Z\to Z$ is a holomorphic involution covering the antipodal map on $\oP^1$.
\par
We are going to consider more general structures. For one, it is useful to consider complex analogues of these notions. Thus a $\cx$-hypercomplex manifold is a complex manifold $M$ such that its holomorphic tangent bundle decomposes as $E\otimes \cx^2$ and, for each nonzero $v\in \cx^2$, the distribution $E\otimes v$ is integrable. This is the geometry on the space of all sections of the twistor fibration with normal bundle $\bigoplus \sO(1)$. Secondly, we want to relax the hypercomplex condition itself, so that we can describe the geometry on a larger part of the Kodaira moduli space of sections of $\pi:Z\to \oP^1$, where the normal bundle jumps.  As argued in \cite{BP1,BP3}, the relevant geometry is that of $2$-Kronecker structures.
\par
A {\em $2$-Kronecker} structure (of rank $2n$) on a complex manifold $M^{4n}$ consists of a holomorphic vector bundle $E$ of rank $2n$ on $M$ and a bundle map $\alpha:E\otimes \cx^2\to TM$ which is injective on $\alpha\rvert_{E_m\otimes v}$ for every $m\in M$ and $v\in\cx^2$.
\par
A quaternionic $2$-Kronecker structure on a real manifold $M^{4n}$ consists of a quaternionic bundle $E$ of (complex) rank $2n$ and a bundle map  $\alpha:E\otimes \cx^2\to T^\cx M$ which is injective on $\alpha\rvert_{E_m\otimes v}$ for every $m\in M$ and $v\in\cx^2$, and intertwines the tensor product of quaternionic structures of $E$ and $\cx^2$ with complex conjugation on $T^\cx M$.
\par
A $2$-Kronecker structure (quaternionic or not) is {\em integrable}, if $T^v M=\alpha(E\otimes v)$ is an involutive subbundle for every $v\in \cx^2\backslash \{0\}$.
\par
Thus hypercomplex (resp.\ $\cx$-hypercomplex) structures are integrable quaternionic $2$-Kronecker structure (resp.\  integrable $2$-Kronecker structures) such that $\alpha$ is an isomorphism.

\subsection{Pseudo-hyperk\"ahler metrics of D'Amorim Santa-Cruz\label{SC}}
Let $G$ be a compact Lie group and $G^\cx$ its complexification. 
 We denote by $\pi$ the natural projection 
\begin{equation}\pi: \g^\cx\otimes \sO(2)\to \bigl(\g^\cx\otimes \sO(2)\bigr)/G^\cx\simeq \bigl(\fH^\cx\otimes\sO(2)\bigr)/W\simeq\bigoplus_{i=1}^r\sO(2d_i),\label{pi}\end{equation}
where $r=\rank G$ and the $d_i$ are the degrees of $\Ad G^\cx$-invariant polynomials forming a basis of $\cx[\g]^{G^\cx}$. We denote by $\bar\mu$ the composition of $\mu$ with the map induced by $\pi$ on global sections. A regular adjoint $G^\cx$ orbit corresponds to a point in $\h^\cx/W$. Therefore, if a section $A(\zeta)$ of $\g^\cx \otimes \sO(2)$ is regular for every $\zeta$, then its $G^\cx$-orbit is identified with a section of $\bigl(\fH\otimes\sO(2)\bigr)/W\simeq\bigoplus_{i=1}^r\sO(2d_i)$.  We shall call any such section $S$ a {\em spectral curve}. 
\begin{definition} A section $A(\zeta)=A_0+A_1\zeta+A_2\zeta^2$ of $\g^\cx \otimes \sO(2)$ is called {\em regular} if
$A(\zeta)$ is regular for every $\zeta$. It is called {\em strongly regular} if it is regular and 
the centralisers of $A(\zeta)$ span,  as $\zeta$ varies in $\oP^1$, the whole $\g^\cx$.
\label{strreg}
\end{definition}
\begin{remark} For $G=U(k)$ strong regularity means that the coefficients of the powers $A(\zeta)^i$, $i=1,\dots,k-1$ are linearly independent in the space of complex $(k\times k)$-matrices.
\end{remark}
Let $X_S$ be the submanifold of $\pi^{-1}(S)$ consisting of regular elements of $\g^\cx\otimes\sO(2)$.
Strongly regular sections of $X_S$ which are also real in the sense that $A(\zeta)=(T_2+iT_3)+2iT_1\zeta +(T_2-iT_3)\zeta^2$, where $T_i\in \g$, are called {\em twistor lines} by D'Amorim Santa-Cruz. They form a manifold $M(S)$, shown by D'Amorim Santa-Cruz \cite{S-C} to be pseudo-hyperk\"ahler. Indeed, he shows that $M(S)$ has  a real bilinear form $g_0$, compatible with hypercomplex structure, such that corresponding holomorphic $2$-forms on fibres of $X_S$ (e.g. $g_0(I_2\,\cdot,\cdot)+ig_0
(I_3\,\cdot,\cdot)$ for $I_1$) coincide with the Kostant-Kirillov-Souriau symplectic forms on adjoint orbits. This means that the fundamental forms $g_0(I_i\,\cdot,\cdot)$, $i=1,2,3$, are nondegenerate and, consequently $g_0$ is a pseudo-hyperk\"ahler metric. 
\begin{remark} If we consider all strongly regular sections, not just the real ones, we obtain a complex manifold equipped with a $\cx$-hyperk\"ahler structure. The $\cx$-hypercomplex structure of this manifold extends to a $2$-Kronecker structure on the manifold of all regular sections. Indeed, it follows from the arguments of D'Amorim Santa-Cruz that $H^1(A,N(-1))=0$ for any regular section, where $N$ is the normal bundle of $A$ in $X_S$. Therefore the degrees of rank one direct summands of $N$ can be only $0,1,2$.  
\end{remark}

\section{Hypercomplex structures from algebraic curves}

Let $C$ be a connected and reduced lci algebraic curve of genus $g$ and $L$ a globally generated and non-special (i.e.\  $h^1(L)=0$) invertible sheaf of degree $d=g+k-1$, $k\geq 2$. Then $h^0(L)=k$, and for any basis $\underline{s}=(s_1,\dots,s_k)$ of $H^0(C,L)$, we obtain a morphism $\phi_L=\phi_{L,\underline{s}}:S\to \oP^{k-1}$ defined by
$$ x\longmapsto [f_1(x),\dots,f_k(x)]\in \oP^{k-1},$$
where $s_i(x)=f_i(x)e$,  $i=1,\dots,k$, for some local holomorphic section $e$ of $L$. Clearly changing the basis $\underline{s}$ by an overall multiplicative scalar does not change $\phi_{L,\underline{s}}$.
\par
Conversely, if $\phi:C\to  \oP^{k-1}$ is a holomorphic map such that the Hilbert polynomial $p(m)=\chi\bigl(\phi^\ast \sO_{\oP^{k-1}}(m)\bigr)$ is equal to $dm-g+1$, and $\phi(C)$ is not contained in a hyperplane, then $L=\phi^\ast \sO_{\oP^{k-1}}(1) $ has degree $d=g+k-1$ and $\phi=\phi_{L,\underline{s}}$
for the basis $\phi^\ast z_i$, where $(z_i)$ is the standard basis of  $H^0\bigl( \oP^{k-1},\sO_{\oP^{k-1}}(1) \bigr)$.
\par
We denote the moduli space of invertible sheaves of degree $l$ by $\Jac^l(C)$, and set:
$$ \Jac_0^{g+k-1}(C)=\{L\in \Jac^{g+k-1}(C);\;\text{$L$ is globally generated},\enskip h^1(L)=0\}.$$
Moreover, $F_0^{g+k-1}(C)$ will denote the principal $PGL(k,\cx)$-bundle over $\Jac_0^{g+k-1}(C)$, the fibres of which consist of frames of $H^0(C,L)$ modulo scalars. We have a bijection
\begin{equation} F_0^{g+k-1}(C)\stackrel{1-1}{\longrightarrow}\Mor_0^p(C,\oP^{k-1}),\label{isom}\end{equation}
where $\Mor^p_0(C,\oP^{k-1})$ is the Hilbert scheme of nondegenerate (i.e.\  such that the image of $C$ is not contained in a hyperplane) morphisms with Hilbert polynomial $p(m)=dm-g+1$.
\begin{proposition} The bijection \eqref{isom} is a biholomorphism of complex manifolds.\end{proposition}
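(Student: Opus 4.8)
The plan is to promote the set-theoretic bijection \eqref{isom} to a biholomorphism by producing holomorphic maps in both directions and checking that they are mutually inverse; two holomorphic maps that are inverse to each other as maps of sets are automatically biholomorphisms, so this will suffice. First I would treat the forward direction. On $C\times F_0^{g+k-1}(C)$ the tautological frame is defined only up to a simultaneous scalar, but such a scalar does not affect the associated point of $\oP^{k-1}$, so it nevertheless determines a genuine morphism $\Phi\colon C\times F_0^{g+k-1}(C)\to\oP^{k-1}$ whose restriction to each fibre is $\phi_{L,\underline{s}}$. As this fibrewise morphism is nondegenerate with Hilbert polynomial $p$, the family $\Phi$ is an $F_0^{g+k-1}(C)$-point of the functor represented by the Hilbert scheme $\Mor^p_0(C,\oP^{k-1})$, and is therefore classified by a holomorphic map $F_0^{g+k-1}(C)\to\Mor^p_0(C,\oP^{k-1})$ which on points is \eqref{isom}.

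For the inverse direction I would use the universal morphism $\Psi\colon C\times\Mor^p_0(C,\oP^{k-1})\to\oP^{k-1}$. Put $\sL=\Psi^\ast\sO_{\oP^{k-1}}(1)$, a relative invertible sheaf of fibrewise degree $d=g+k-1$, with tautological relative sections $\Psi^\ast z_1,\dots,\Psi^\ast z_k$. Since $H^2(\sO_C)=0$ the moduli space $\Jac^{g+k-1}(C)$ is smooth, and by its universal property $\sL$ is classified by a holomorphic map $\Mor^p_0(C,\oP^{k-1})\to\Jac^{g+k-1}(C)$; it lands in $\Jac_0^{g+k-1}(C)$ because each fibre $\sL_t$ is globally generated with $h^1=0$. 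Choosing locally over $\Jac_0^{g+k-1}(C)$ a Poincar\'e sheaf $\sP$, cohomology and base change (using $h^0(L)=k$, $h^1(L)=0$) make its direct image a rank-$k$ bundle whose bundle of frames modulo scalars is exactly $F_0^{g+k-1}(C)$; the sections $\Psi^\ast z_i$ then furnish a holomorphic frame modulo scalar, hence a holomorphic lift $\Mor^p_0(C,\oP^{k-1})\to F_0^{g+k-1}(C)$.

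By the explicit formulas preceding the statement these two maps are inverse to one another on points, so they are biholomorphisms. As a consistency check both spaces are smooth of the same dimension: pulling back the Euler sequence gives $0\to\sO_C\to L^{\oplus k}\to\phi^\ast T_{\oP^{k-1}}\to 0$, so $H^1(C,\phi^\ast T_{\oP^{k-1}})$ is trapped between $H^1(L)^{\oplus k}=0$ and $H^2(\sO_C)=0$; hence $\Mor^p_0(C,\oP^{k-1})$ is unobstructed with tangent space $H^0(C,\phi^\ast T_{\oP^{k-1}})$ of dimension $g+k^2-1=\dim\Jac_0^{g+k-1}(C)+\dim PGL(k,\cx)$. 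I expect the only real obstacle to be the analytic bookkeeping in the inverse direction: over a possibly singular reduced lci curve a Poincar\'e sheaf exists only locally on $\Jac_0^{g+k-1}(C)$ and only up to twisting by a line bundle pulled back from the base, so one must verify that the frame extracted from the $\Psi^\ast z_i$ is globally well defined after passing to $PGL(k,\cx)$ and varies holomorphically — this is precisely where quotienting by scalars, rather than working with the full $GL(k,\cx)$-frame bundle, makes the construction canonical.
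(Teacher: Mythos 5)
Your proof is correct, but it takes a genuinely different route from the paper's. The paper fixes smoothness of both sides exactly as you do (via $h^1(L)=0$ and the pulled-back Euler sequence, so $h^1(\phi^\ast T_{\oP^{k-1}})=0$), but then finishes \emph{infinitesimally}: it takes the long exact sequence of the Euler sequence, identifies $H^0(C,L)^\ast\otimes H^0(C,L)/\cx$ inside $H^0(C,\phi^\ast T_{\oP^{k-1}})$ with the fundamental vector fields of the frame change and the quotient $H^1(C,\sO_C)$ with variations of $L$, and concludes that the differential of the bijection is an isomorphism at every point, whence the inverse function theorem applies. You instead argue \emph{functorially}, exhibiting holomorphic maps in both directions via the universal property of the Hom-scheme and of $\Jac^{g+k-1}(C)$ together with local Poincar\'e sheaves and cohomology-and-base-change, and observing that mutually inverse holomorphic maps are biholomorphisms. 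Your approach avoids the inverse function theorem and makes the holomorphicity of the set-theoretic bijection (which the paper takes as implicit) completely explicit, at the cost of the representability and base-change machinery and the careful bookkeeping of the scalar ambiguities you rightly flag at the end; the paper's approach is shorter and, more importantly, produces the explicit tangent-space decomposition \eqref{TMor} that is reused later in the construction of the $2$-Kronecker structure. Both hinge on the same vanishing $h^1(L)=0$, and your dimension count confirms consistency.
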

\begin{proof} Since $\Jac^{g+k-1}(C)$ is smooth, so is $F_0^{g+k-1}(C)$. Let  $\phi=\phi_{L,\underline{s}}$ be an element of $\Mor^P_0(C,\oP^{k-1})$. The pullback of the Euler sequence on $\oP^{k-1}$ is
\begin{equation} 0\rightarrow \sO_C\to H^0(C,L)^\ast\otimes L\to \phi^\ast T_{\oP^{k-1}}\to 0.\label{Euler}\end{equation}
Since $h^1(L)=0$, $h^1(\phi^\ast T_{\oP^{k-1}})=0$, and hence $\Mor_0^p(C,\oP^{k-1})$ is smooth at $\phi$ (see e.g.\ \cite{Deb}). Moreover, the tangent space at $\phi$ is canonically identified  with $H^0(C,\phi^\ast T_{\oP^{k-1}})$, and the long exact sequence of of \eqref{Euler} yields
\begin{equation}0\to \cx \longrightarrow H^0(C,L)^\ast\otimes H^0(C,L)\longrightarrow  H^0(C,\phi^\ast T_{\oP^{k-1}})\longrightarrow H^1(C,\sO_C)\to 0.\label{TMor}\end{equation}
The subspace $ H^0(C,L)^\ast\otimes H^0(C,L)/\cx$ of $ H^0(C,\phi^\ast T_{\oP^{k-1}})$ is precisely the subspace generated by fundamental vector fields (corresponding to a change of frame), while  $H^1(C,\sO_C)$ corresponds to variations of the invertible sheaf $L$. This describes the differential of  \eqref{isom}  and shows that it is an isomorphism at every point of $F_0^{g+k-1}(C)$.
\end{proof}

\smallskip

We now assume that the arithmetic genus of $C$ is equal to $(k-1)^2$, and that $C$ is equipped with a fixed flat projection $\pi:C\to\oP^1$ of degree $k$.  If $C$ is smooth, then the number of branch points is $2k^2-2k$, and so the dimension of the moduli space parametrising such pairs $(C,\pi)$ is $2k^2-2k$ (more precisely this is the dimension of the Hurwitz scheme parametrising simple branch coverings of $\oP^1$ of degree $k$ with $2k^2-2k$ branch points).
\par
Denote by $\sO_C(1)$ the line bundle $\pi^\ast \sO_{\oP^1}(1)$. This is an invertible sheaf of degree $k$, and the map $L\mapsto L(1)$ is an isomorphism $\Jac^{g-1}(C)\to \Jac^{g+k-1}(C)$. We denote by $ \Jac_\ast^{g+k-1}(C)$ the subset consisting of $L$ such that $L(-1)\not\in \Theta_C$. We have the corresponding open subsets $F_\ast^{g+k-1}(C)$ of $F_0^{g+k-1}(C)$ and $\Mor_\ast^p(C,\oP^{k-1})$ of $\Mor_0^p(C,\oP^{k-1})$.
\par
We  now define a natural $PGL(k,\cx)$-invariant integrable $2$-Kronecker structure on  $M=F_\ast^{g+k-1}(C)\simeq \Mor_\ast^p(C,\oP^{k-1})$. The bundle $E$ is 
$$ E\rvert_\phi=H^0\bigl(C,\phi^\ast T_{\oP^{k-1}}\otimes\sO_C(-1)\bigr),$$
and the morphism $\alpha:E\otimes \cx^2\to TM$ is given by multiplication by elements of $\pi^\ast H^0(\oP^1,\sO_{\oP^1}(1))\simeq \cx ^2$. If $s\in H^0(\oP^1,\sO_{\oP^1}(1))$ vanishes at $\zeta$, then we have an exact sequence
$$ 0\to \phi^\ast T_{\oP^{k-1}}\otimes\sO_C(-1)\stackrel{\cdot s\circ \pi}{\longrightarrow}\, \phi^\ast T_{\oP^{k-1}} \longrightarrow \phi^\ast T_{\oP^{k-1}}\rvert_{\pi^{-1}(\zeta)}\to 0,$$
and $\alpha\rvert_{E\otimes\zeta}$ is the induced map on global sections. Hence $\alpha$ defines a $2$-Kronecker structure (of rank $\frac{1}{2}\dim M$). Moreover, the condition $h^1(L(-1))=0$ and \eqref{Euler} imply that $h^1\bigl(C,\phi^\ast T_{\oP^{k-1}}\otimes\sO_C(-1)\bigr)=0$, and hence the 
the image of $E\otimes\zeta$ are precisely those sections of $\phi^\ast T_{\oP^{k-1}}$ which vanish on $\pi^{-1}(\zeta)$. This is the tangent space to the subvariety of $\Mor_\ast^p(C,\oP^{k-1})$ consisting of morphisms which restrict to a fixed morphism over $\pi^{-1}(\zeta)$ (see, e.g.\ \cite{Deb}). The $2$-Kronecker structure is therefore integrable.
\begin{proposition} The above $2$-Kronecker structure is $\cx$-hypercomplex on the subset of $F_\ast^{g+k-1}(C)$ corresponding to $L\in \Jac_\ast^{g+k-1}(C)$ such that the natural multiplication map
$$ H^0(C,L)\otimes H^0(C,L^\ast\otimes K_C(2))\longrightarrow H^0(C,K_C(2))$$
is an isomorphism.\label{F**}
\end{proposition}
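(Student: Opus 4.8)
The plan is to verify the two defining properties of a $\cx$-hypercomplex structure. Integrability was already checked in the paragraph preceding the statement, so the only remaining point is that the $2$-Kronecker map $\alpha$ is an \emph{isomorphism} of bundles, not merely fibrewise injective on each slice $E\otimes v$. Write $F=\phi^\ast T_{\oP^{k-1}}$ and $W=H^0(\oP^1,\sO_{\oP^1}(1))\cong\cx^2$. By the definition of the bundle $E$ and of $\alpha$, at a point $\phi$ the map $\alpha$ is the multiplication map
$$\mu:H^0(C,F(-1))\otimes W\longrightarrow H^0(C,F),\qquad \eta\otimes s\mapsto (s\circ\pi)\,\eta.$$
Since we already know this is a $2$-Kronecker structure of rank $\tfrac12\dim M$, the bundles $E\otimes\cx^2$ and $TM$ have equal rank; hence $\alpha$ is an isomorphism as soon as $\mu$ is injective, and the whole statement reduces to showing that the hypothesis forces $\ker\mu=0$.

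First I would realise $\ker\mu$ cohomologically. Pulling back the twisted Euler sequence $0\to\sO_{\oP^1}(-1)\to\sO_{\oP^1}\otimes W\to\sO_{\oP^1}(1)\to 0$ under $\pi$ gives the exact sequence $0\to\sO_C(-1)\to\sO_C\otimes W\to\sO_C(1)\to 0$ on $C$; tensoring with $F(-1)$ yields
$$0\to F(-2)\to F(-1)\otimes W\to F\to 0.$$
The induced map on global sections is exactly $\mu$, so $\ker\mu=H^0(C,F(-2))$. Thus it suffices to prove that the hypothesis forces $H^0(C,F(-2))=0$.

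The key numerical input is a Riemann--Roch computation, which with $\deg F=k^2(k-1)$, $\rank F=k-1$ and $g=(k-1)^2$ gives $\chi(C,F(-2))=0$; hence $h^0(F(-2))=h^1(F(-2))$, and it is enough to show $H^1(C,F(-2))=0$. Since $F$ is locally free and $C$ is a reduced lci (hence Gorenstein) curve, Serre duality with the dualising sheaf $K_C$ gives $H^1(C,F(-2))^\ast\cong H^0(C,F^\ast\otimes K_C(2))$. I would then dualise the Euler sequence \eqref{Euler} and twist by $K_C(2)$, obtaining
$$0\to F^\ast\otimes K_C(2)\to H^0(C,L)\otimes L^\ast\otimes K_C(2)\to K_C(2)\to 0,$$
whose long exact sequence identifies the kernel of the shifted Petri map $\nu:H^0(C,L)\otimes H^0(C,L^\ast\otimes K_C(2))\to H^0(C,K_C(2))$ with $H^0(C,F^\ast\otimes K_C(2))$. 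Chaining these equivalences, $\nu$ injective $\iff H^0(F^\ast\otimes K_C(2))=0\iff H^1(F(-2))=0\iff H^0(F(-2))=0\iff\alpha$ an isomorphism; and the hypothesis that $\nu$ is an isomorphism (a fortiori injective) closes the argument.

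I expect the main difficulty to be bookkeeping rather than conceptual: verifying the exact numerical coincidence $\chi(C,F(-2))=0$, which is precisely what lets injectivity of $\nu$ propagate across Serre duality to the vanishing of $H^0(F(-2))$, and checking that Serre duality and the cohomological identifications are valid on a possibly singular reduced lci curve. It is worth recording a consistency point: since $h^0(K_C(2))=k^2$ and $h^0(L)=k$, injectivity of $\nu$ already forces $h^0(L^\ast\otimes K_C(2))=k$, so that source and target of $\nu$ both have dimension $k^2$; for such $L$ the injectivity of $\nu$ is therefore equivalent to $\nu$ being an isomorphism, consistent with the way the hypothesis is phrased.
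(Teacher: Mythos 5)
Your proof is correct and takes essentially the same route as the paper's: both reduce the isomorphy of $\alpha$ to the vanishing of $H^1\bigl(\phi^\ast T_{\oP^{k-1}}(-2)\bigr)$ (equivalently of $H^0$, via $\chi=0$) using the pulled-back Euler sequence of $\oP^1$, and then translate this into the bijectivity of the shifted Petri map via the Euler sequence \eqref{Euler} and Serre duality. The only cosmetic difference is the point at which Serre duality is applied --- the paper dualises the coboundary map $H^1(C,\sO_C(-2))\to H^0(C,L)^\ast\otimes H^1(C,L(-2))$ at the end, while you dualise the Euler sequence first and identify $H^0\bigl(\phi^\ast T_{\oP^{k-1}}^\ast\otimes K_C(2)\bigr)$ with the kernel of the Petri map --- so the two computations are Serre-dual presentations of the same argument.
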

\begin{proof} The Euler sequence on $\oP^1$ gives
$$ 0\to \phi^\ast T_{\oP^{k-1}}\otimes\sO_C(-2) \longrightarrow \phi^\ast T_{\oP^{k-1}}\otimes\sO_C(-1)\otimes \cx^2 \longrightarrow \phi^\ast T_{\oP^{k-1}}\otimes\sO_C\to 0,$$
and hence $\alpha$ is an isomorphism if and only if $ h^1\bigl(\phi^\ast T_{\oP^{k-1}}\otimes \sO_C(-2)\bigr)=0$ (since $\chi\bigl(\phi^\ast T_{\oP^{k-1}}\otimes \sO_C(-2)\bigr)=0$ from \eqref{Euler}).
After tensoring \eqref{Euler} by $\sO_C(-2)$ and taking the long exact sequence, this is equivalent to the natural map
$$ m: H^1(C,\sO_C(-2))\to H^0(C,L)^\ast\otimes H^1(C,L(-2))
\simeq \Hom\bigl( H^0(C,L), H^1(C,L(-2)\bigr)
$$
being an isomorphism. 
Observe that this map is simply $ m(\phi)(s)=\phi s$. Using Serre duality, the dual map is the multiplication
\begin{equation*} H^0(C,L)\otimes H^0(C,K_CL^\ast(2))\to H^0(C,K_C(2)).\label{multipl}\end{equation*}
\end{proof}
\begin{example} The $2$-Kronecker structure is certainly degenerate if $L(-1)$ is a theta characteristic. Indeed, in this case $K_CL^\ast(2)\simeq L$ and the multiplication map $H^0(C,L)\otimes H^0(C,L)\to H^0(C,K_C(2))$ vanishes on the skew-symmetric part of the tensor product.
\end{example}

\subsection{Reality conditions\label{M(S,pi)}}

Suppose now that $C$ is equipped with an antiholomorphic involution $\sigma$ covering the antipodal map on $\oP^1$. We consider the following induced involution on $\Jac^{g+k-1}(C)$:
$$\sigma: L\mapsto \bigl(\ol{\sigma^\ast L}\bigr)^\ast\otimes K_C(2),$$
and $L$ will be called {\em real} if $\sigma(L)\simeq L$, i.e.\ $\ol{\sigma^\ast L}\simeq L^\ast \otimes K_C(2)$.
\par
If $L$ is a real, then the pullback $s\to\sigma^\ast s$ induces an antiholomorphic isomorphism
\begin{equation} H^0(C,L)\to H^0(C,K_CL^\ast (2)).\label{sigma-sect}\end{equation}
In order to obtain a hypercomplex structure, we need to extend this real structure to $F_\ast^{g+k-1}(C)$. If we tensor the pullback to $C$ of the Euler sequence on $\oP^1$ with $K_C$, we obtain
\begin{equation} 0\rightarrow K_C\to K_C(1)\otimes \cx^2\to K_C(2)\to 0,\label{K_S}
\end{equation}
and hence a natural surjective homomorphism
\begin{equation} \gamma:H^0(C,K_C(2))\to H^1(C,K_C)\simeq \cx.
\end{equation}
The sequence \eqref{K_S} is compatible with the real structure and therefore we can define,
for a real invertible sheaf $L\in \Jac^{g+k-1}(C)$, a hermitian form on $ H^0(C,L)$ by
\begin{equation}\langle s,t\rangle=\gamma(s\sigma^\ast t).\label{hform}\end{equation}
\begin{lemma} If $L(-1)\not\in\Theta_C$, then $\langle\,,\rangle$ is nondegenerate.
\end{lemma}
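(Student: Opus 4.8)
The plan is to recognise the Hermitian form \eqref{hform} as the Serre duality pairing, transported by a connecting homomorphism that becomes an isomorphism exactly under the hypothesis $L(-1)\notin\Theta_C$. Since \eqref{sigma-sect} provides a conjugate-linear isomorphism $\sigma^\ast:H^0(C,L)\to H^0(C,K_CL^\ast(2))$, the Hermitian form $\langle s,t\rangle=\gamma(s\,\sigma^\ast t)$ is nondegenerate if and only if the $\cx$-bilinear pairing
$$H^0(C,L)\times H^0(C,K_CL^\ast(2))\to\cx,\qquad (s,u)\mapsto\gamma(s\,u),$$
is perfect. This is what I would prove.

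First I would introduce the connecting map attached to $L^\ast$. Tensoring the pulled-back Euler sequence $0\to\sO_C\to\sO_C(1)\otimes\cx^2\to\sO_C(2)\to0$ by $K_CL^\ast$ gives
$$0\to K_CL^\ast\to K_CL^\ast(1)\otimes\cx^2\to K_CL^\ast(2)\to0,$$
with connecting homomorphism $\delta:H^0(C,K_CL^\ast(2))\to H^1(C,K_CL^\ast)$. The key observation is that $L(-1)\notin\Theta_C$, i.e.\ $h^0(L(-1))=0$, forces $\delta$ to be an isomorphism: indeed $\deg L(-1)=g-1$ gives $\chi(L(-1))=0$, so $h^1(L(-1))=h^0(L(-1))=0$ as well, and Serre duality on the Gorenstein curve $C$ yields $H^0(K_CL^\ast(1))\simeq H^1(L(-1))^\ast=0$ together with $H^1(K_CL^\ast(1))\simeq H^0(L(-1))^\ast=0$ (using $(K_CL^\ast(1))^\ast\otimes K_C=L(-1)$). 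Hence the long exact sequence collapses and $\delta$ is an isomorphism.

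Next I would relate $\gamma$ and $\delta$ by naturality. Multiplication by $s\in H^0(C,L)$ is a sheaf map $\sO_C\to L$, and tensoring it with $K_CL^\ast$ gives a morphism from the $\delta$-sequence into \eqref{K_S} whose vertical arrows are $\cdot s$. Functoriality of the connecting homomorphism then produces the commuting square
$$\begin{CD}
H^0(C,K_CL^\ast(2)) @>{\delta}>> H^1(C,K_CL^\ast)\\
@VV{\cdot s}V @VV{\cdot s}V\\
H^0(C,K_C(2)) @>{\gamma}>> H^1(C,K_C),
\end{CD}$$
so that $\gamma(s\,u)=s\cdot\delta(u)$ in $H^1(C,K_C)\simeq\cx$. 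The right-hand vertical arrow $H^1(C,K_CL^\ast)\xrightarrow{\cdot s}H^1(C,K_C)$ is precisely cup product with $s$, which is the Serre duality pairing of $H^1(C,K_CL^\ast)$ with $H^0(C,L)$ (again via $(K_CL^\ast)^\ast\otimes K_C=L$); this pairing is perfect since $C$ is lci, hence Gorenstein with dualizing sheaf $K_C$ and $H^1(C,K_C)\simeq H^0(C,\sO_C)^\ast\simeq\cx$.

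Nondegeneracy then follows formally: given $0\neq s\in H^0(C,L)$, the Serre pairing supplies $\xi\in H^1(C,K_CL^\ast)$ with $s\cdot\xi\neq0$; since $\delta$ is an isomorphism we may write $\xi=\delta(u)$, and since $\sigma^\ast$ is an isomorphism $u=\sigma^\ast t$ for a unique $t$, whence $\langle s,t\rangle=\gamma(s\,\sigma^\ast t)=s\cdot\xi\neq0$, with the symmetric argument in the second variable. I expect the only genuine work to be the third step — verifying that the two connecting maps are intertwined by multiplication by $s$ and that the resulting vertical map on $H^1$ is exactly the Serre pairing; the remainder is bookkeeping with Riemann--Roch and duality on a Gorenstein curve, so the real content is the clean factorisation of $\gamma(s\,u)$ through Serre duality via the isomorphism $\delta$.
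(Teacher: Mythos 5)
Your proof is correct, but it is a genuinely different argument from the one in the paper. The paper argues pointwise: given $s\neq 0$ it picks a fibre $\pi^{-1}(\zeta_0)$ consisting of $k$ distinct points none of which is a zero of $s$, constructs a test section $t$ vanishing at all but one point of the conjugate fibre $\pi^{-1}(-1/\bar\zeta_0)$, and shows $\gamma(s\sigma^\ast t)\neq 0$ by deriving from $\gamma(s\sigma^\ast t)=0$ a section of $K_C(1)$ vanishing on $\pi^{-1}(\zeta_0)$ minus one point $p$, which would live in $H^0(C,K_C(p))\setminus H^0(C,K_C)=\emptyset$. You instead factor $\gamma(s\,\cdot)$ through the connecting map $\delta:H^0(C,K_CL^\ast(2))\to H^1(C,K_CL^\ast)$ of the twisted Euler sequence --- an isomorphism precisely because $h^0(L(-1))=h^1(L(-1))=0$ --- followed by the Serre duality pairing of $H^1(C,K_CL^\ast)$ with $H^0(C,L)$; all the steps (the collapse of the long exact sequence, the naturality square for $\delta$ and $\gamma$, the perfectness of the Serre pairing on a reduced lci, hence Gorenstein, curve) check out. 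The trade-off: the paper's proof is concrete and elementary but leans on genericity choices (an unramified fibre avoiding the zeros of $s$, an interpolating section $t$) whose existence deserves a word when $C$ is reducible or $s$ vanishes on a component; your proof is uniform over all reduced lci curves, isolates exactly where the hypothesis $L(-1)\not\in\Theta_C$ enters (it is what makes $\delta$ invertible), and meshes naturally with the Euler-sequence/Serre-duality mechanism already used in the proof of Proposition \ref{F**}.
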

\begin{proof} Let  $s\in H^0(C,L)$. Let $\zeta_0$ be a point such that $\pi^{-1}(\zeta_0)$ consists of distinct points and  $s$ does not vanish at any of them. We can find a section $t$ of $L$ which does not vanish at exactly one point of $\pi^{-1}(-1/\bar\zeta_0)$. Then 
$u=s\sigma^\ast t$ does not vanish at exactly one point of $\pi^{-1}(\zeta_0)$. If  $\gamma(u)=0$, then \eqref{K_S} implies that $u=v_1+(\zeta-\zeta_0)v_2$ for some $v_1,v_2\in H^0(C,K_C(1))$. Therefore $v_1$ does not vanish at exactly one point $p$ of $\pi^{-1}(\zeta_0)$. But this means that  $v_1\in H^0(C,K_C(p))\backslash H^0(C,K_C)=\emptyset$.
This contradiction implies that $\gamma(s\sigma^\ast t)\neq 0$.
\end{proof}
We thus obtain a quaternionic $2$-Kronecker structure on the subset $V$ of $F_\ast^{g+k-1}(C)$ where the sheaves are real and the fibres consist of unitary frames. Since we are interested in $PU(k)$-invariant hypercomplex structure,
we denote by $\bar M(C,\pi)$ the subset of $V$ where \eqref{hform} is either positive- or negative-definite. Its open subset, where the condition of Proposition \ref{F**} is satisfied will be denoted by $M(C,\pi)$.
\par
Thus  $M(C,\pi)$ (resp.\  $\bar M(C,\pi)$) is an $U(k)$-invariant hypercomplex manifold (resp. $U(k)$-invariant quaternionic $2$-Kronecker manifold).
\begin{remark} $M(C,\pi)$ must be nonempty at least for $(C,\pi)$ in a neighbourhood of $\sM_0$ inside the real locus of the Hurwitz scheme $\sH_{k,(k-1)^2}$, where $\sM_0$ consists of real $(C,\pi)$ such that $C$ is embedded in $T\oP^1$ and $\pi$ is the restriction of $T\oP^1\to \oP^1$.
\end{remark}


\subsection{Relation to D'Amorim Santa-Cruz manifolds}
D'Amorim Santa-Cruz's manifolds $M(S)$ (cf.\ \S\ref{SC}) for $G=U(k)$ are included in the above construction and correspond to pairs $(C,\pi)$ such that $C=S$ is embedded in $T\oP^1$ (with the original projection $\pi:C\to\oP^1$ equal to the one arising from the embedding). Indeed, if $C$ is embedded in $T\oP^1$, then it follows from results of Beauville \cite{Beau} that $\Jac_\ast^{g+k-1}(C)$ corresponds to $GL(k,\cx)$-conjugacy classes of quadratic $\gl(k,\cx)$-valued polynomials. The $\cx$-hypercomplex structures on the space of strongly regular sections and on the manifold defined in Proposition \ref{F**} are easily seen to be the same, as are their restrictions to the real parts. The hermitian form $\langle\,,\rangle$ on $ H^0(C,L)$ is then that of Hitchin \cite[\S 6]{Hit}.
\par
Proposition \ref{F**} provides now an algebro-geometric characterisation of strong regularity. A quadratic polynomial defines a strongly regular section if and only if  the corresponding invertible sheaf satisfies the condition of Proposition \ref{F**}.

\subsection{Complex structures}

We return to the hypercomplex manifold $M(C,\pi)$ defined above. It consists of $\sigma$-invariant nondegenerate morphisms $\pi:C\to \oP^{k-1}$ which correspond to invertible sheaves $L$ of degree $g+k-1$ such that
\begin{itemize}
\item[(i)] $L(-1)\not\in \Theta_C$;
\item[(ii)] the multiplication map $H^0(C,L)\otimes H^0(C,K_CL^\ast(2))\to H^0(C,K_C(2))$ is an isomorphism;
\item[(iii)] the hermitian form $\langle\,,\rangle$ is definite.
\end{itemize}

For every $\zeta\in\oP^1$ the fibre $C_\zeta=\pi^{-1}(\zeta)$ of $\pi:C\to\oP^1$ can be identified with a {\em fat point} $\sum k_ip_i$ in $\cx$, i.e.\ a zero-dimensional scheme $\bigcup_{i=1}^s {\rm Spec}\, \cx[t]/((t-t_i)^{k_i})$, with $t_i$ distinct. The restriction of $\phi$ to $C_\zeta$ must be nondegenerate, since the image of $\phi_\zeta$ being contained in a hyperplane is equivalent to a section of $L$ vanishing on $\pi^{-1}(\zeta)$, which contradicts (i) above.  Thus, for any complex structure $I_\zeta\in Q$ of $M(C,\pi)$ corresponding to $\zeta\in \oP^1$, we have a local equivariant biholomorphism from $M(C,\pi)$ to the manifold of nondegenerate morphisms from $\sum k_ip_i$ to $\oP^{k-1}$. This latter manifold is easily described:
\begin{lemma} The (smooth) space of nondegenerate morphisms from $\sum k_ip_i$ to $\oP^{k-1}$ is equivariantly isomorphic to $GL(k,\cx)/Z(J)$, where $Z(J)$ is the centraliser of the  Jordan normal form matrix $J=J_{t_1,k_1}\oplus\dots \oplus J_{t_s,k_s}$, with distinct $t_i\in \cx$.\label{Z}\end{lemma}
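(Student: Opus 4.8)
The plan is to strip the moduli problem down to pure linear algebra over the Artinian ring of functions on the fat point, and then to recognise the resulting quotient as the homogeneous space $GL(k,\cx)/Z(J)$. First I would identify the coordinate ring. Since the $t_i$ are distinct, the Chinese Remainder Theorem gives $A:=\prod_{i=1}^s \cx[t]/((t-t_i)^{k_i})\cong \cx[t]/(p)$ with $p(t)=\prod_i(t-t_i)^{k_i}$, a finite $\cx$-algebra with $\dim_\cx A=k$ whose factors are local Artinian. Because each factor is local, the Picard group $\operatorname{Pic}$ of the fat point vanishes, so every line bundle is isomorphic to $\sO\cong A$. A morphism to $\oP^{k-1}$ is then the data of a tuple $(s_1,\dots,s_k)$ of globally generating sections of $A$, taken modulo automorphisms of the line bundle, i.e.\ modulo multiplication by the unit group $A^\times$. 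Nondegeneracy (image not contained in a hyperplane) says exactly that the $s_j$ are linearly independent over $\cx$; as there are $k$ of them in the $k$-dimensional space $A$, they must form a $\cx$-basis. Such a basis automatically generates $A$ as an $\sO$-module, since each maximal ideal $\mathfrak{m}_i$ has $\dim_\cx \mathfrak m_i=k-1$ and so cannot contain $k$ linearly independent elements. Hence the morphism space is canonically $\{\text{bases of }A\}/A^\times$.

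The key structural step is to compute the $A^\times$-action. The regular representation $A\to\End_\cx(A)$, $\lambda\mapsto \mu_\lambda$ (multiplication by $\lambda$), is injective, and since $A=\cx[t]/(p)$ is generated by $t$ its image is the commutative algebra $\cx[\mu_t]$. On the factor $\cx[t]/((t-t_i)^{k_i})$ the operator $\mu_t$ equals $t_i\,\mathrm{Id}+\mu_{t-t_i}$ with $\mu_{t-t_i}$ a single nilpotent Jordan block of size $k_i$; thus $\mu_t$ has Jordan form exactly $J=J_{t_1,k_1}\oplus\dots\oplus J_{t_s,k_s}$ and is non-derogatory, its minimal and characteristic polynomials both being $p$. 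For a non-derogatory endomorphism the centraliser in $\End_\cx(A)$ is precisely the polynomial algebra it generates, so $\{\mu_\lambda:\lambda\in A\}=\cx[\mu_t]=Z_{\End}(\mu_t)$, of dimension $k$. Consequently $A^\times$ maps isomorphically onto the group centraliser $Z_{GL}(\mu_t)$.

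To finish I would fix a Jordan basis $\beta_0:\cx^k\xrightarrow{\sim}A$ for which $\mu_t$ becomes $J$. Writing an arbitrary basis as $\beta=\beta_0\circ h$ identifies $\{\text{bases of }A\}$ with $GL(k,\cx)$. Under this identification the two commuting actions become multiplications on opposite sides: the symmetry group $GL(k,\cx)$ (acting on the source $\cx^k$ of the frame $e_j\mapsto s_j$, equivalently on the homogeneous coordinates of $\oP^{k-1}$) multiplies on one side, while $A^\times$, acting on the target $A$ by $\beta\mapsto\mu_\lambda\circ\beta$, becomes left multiplication by $\beta_0^{-1}\mu_\lambda\beta_0\in Z(J)$. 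Quotienting by $A^\times$ therefore yields $Z(J)\backslash GL(k,\cx)$, with the residual $GL(k,\cx)$-symmetry acting on the other side; inverting, $h\mapsto h^{-1}$, turns this into $GL(k,\cx)/Z(J)$ with the standard left action, giving the asserted equivariant isomorphism. Smoothness is then automatic, the space being homogeneous.

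I expect the only real subtlety to be the middle step: pinning down the $A^\times$-orbits as the left $Z(J)$-cosets. This rests entirely on $J$ being non-derogatory --- equivalently, on each eigenvalue $t_i$ contributing a single Jordan block, which is built into the definition $J=\bigoplus_i J_{t_i,k_i}$ with the $t_i$ distinct --- so that $\cx[\mu_t]$ exhausts the full centraliser rather than a proper subalgebra. The remaining care is purely bookkeeping of left versus right actions and of the inverse needed to match the standard action on $GL(k,\cx)/Z(J)$; the dimension count $k^2-k=\dim GL(k,\cx)-\dim Z(J)$ serves as a useful consistency check throughout.
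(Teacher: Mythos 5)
Your argument is correct. It establishes the same two facts as the paper's proof --- transitivity of the $GL(k,\cx)$-action and identification of the stabiliser with $Z(J)$ --- but packages them differently. The paper proceeds by explicit normalisation: it decomposes $\cx^k=\bigoplus V_i$ according to the spans of the images of the multiple points, moves each restriction $\phi|_{k_ip_i}$ to the rational normal curve $t\mapsto[1,t,\dots,t^{k_i-1}]$, and reads off the stabiliser as the block-diagonal transformations acting by multiplication by a polynomial $\alpha_i(t)$ on each factor --- which is exactly your image of the regular representation of $A=\prod_i\cx[t]/((t-t_i)^{k_i})$. You never exhibit a normal form: triviality of the Picard group of the fat point identifies the morphism space with $\{\text{bases of }A\}/A^\times$ at the outset (nondegeneracy forcing the $k$ sections to be a $\cx$-basis, which then generates automatically), so transitivity is free because bases form a $GL(k,\cx)$-torsor, and the stabiliser is the unit group $A^\times$ acting through the regular representation, equal to $Z(J)$ precisely because $\mu_t$ is non-derogatory. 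Your route is slightly more robust: it does not require separately justifying that the stabiliser preserves the decomposition $\bigoplus V_i$, and it makes smoothness and equivariance transparent (modulo the left/right bookkeeping you flag, which is indeed the only remaining care needed); the paper's computation, in exchange, produces a concrete standard morphism --- a disjoint union of rational normal curves --- as a base point.
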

\begin{proof}  The image of each multiple point $k_ip_i$ can span at most a $(k_i-1)$-hyperplane in $\oP^{k-1}$. Therefore, if $\phi:\sum k_ip_i\to\oP^{k-1}$ is nondegenerate, then there exists a direct sum decomposition $\cx^k=\bigoplus_{i=1}^s V_i$, $\dim V_i=k_i$, such that $\phi|_{k_ip_i}$ is a nondegenerate morphisms to $\oP(V_i)$. Using the action of $GL(k,\cx)$ we can assume that each $V_i$ is spanned by the corresponding set of coordinate vectors. Using now the action of $GL(V_i,\cx)$ we can move $\phi|_{ k_ip_i}$ to $t\mapsto [1,t,\dots, t^{k_i-1}]$ in $\oP V_i$. Thus the space of nondegenerate morphisms from $\sum k_ip_i$ to $\oP^{k-1}$ is acted upon transitively by $GL(k,\cx)$ and the stabiliser of 
$$t\longmapsto [1,t,\dots, t^{k_1-1}, 1,t,\dots, t^{k_2-1},\dots\dots, 1,t,\dots, t^{k_s-1}]$$ consists of linear transformations $(g_1,\dots,g_s)\in \bigoplus_{i=1}^sGL(k_i,\cx)$ such that $$g_i(1,t,\dots,t^{k_i-1})^T= \alpha_i(t)(1,t,\dots,t^{k_i-1})^T,\quad i=1,\dots,s,$$ for a polynomial $\alpha_i(t)$ of degree $k_i-1$. The result follows.
\end{proof}  
Therefore $\bigl(M(C,\pi),I_\zeta\bigr)$  is  locally equivariantly  biholomorphic to  $ GL(k,\cx)/Z(J)$. This latter manifold is biholomorphic to  the adjoint orbit of $J$, but the biholomorphism is not canonical. It becomes canonical only in the case of D'Amorim Santa-Cruz manifolds, where there is also a complex symplectic form and the corresponding moment map embedding $\bigl(M(C,\pi),I_\zeta\bigr)$ into $\gl(k,\cx)$.

\section{Proof of Theorem A}

We have already shown how to construct a hypercomplex structure with required properties from an algebraic curve. Conversely, let $M$ be a hypercomplex manifold with properties stated in Theorem A. We view $M$ as a $U(k)$-invariant manifold (with the centre acting trivially) and, for any  $m\in M$ and for every complex structure $I_\zeta\in Q$, $\zeta\in \oP^1$, we  denote by $\fZ_{m,\zeta}$ the infinitesimal stabiliser of $m$ in $\gl(k,\cx)$. Since the antipodal map on $\oP^1$ corresponds to $I\mapsto -I$ on $Q$, we have the following reality condition:
\begin{equation} \fZ_{m,-1/\bar\zeta}=(\fZ_{m,\zeta})^\ast,\quad \forall \zeta\in\oP^1.\label{Zreal}\end{equation}
\begin{lemma} The family $\fZ_{m,\zeta}$, $\zeta\in\oP^1$, is a holomorphic vector bundle  of rank $k$ and degree $k-k^2$.
\end{lemma}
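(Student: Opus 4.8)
The plan is to realise the whole family $\{\fZ_{m,\zeta}\}_{\zeta\in\oP^1}$ as the kernel of a single holomorphic morphism of bundles over $\oP^1$ and then to read off its rank and degree from the resulting short exact sequence. Fix $m$, and recall that $\fZ_{m,\zeta}$ is by definition the kernel of the infinitesimal action map $\rho_\zeta:\gl(k,\cx)\to T^{1,0}_m(M,I_\zeta)$. Let $p:Z\to\oP^1$ be the twistor space of $(M,Q)$ and let $L_m\subset Z$ be the twistor line through $m$, a section of $p$. Because the $PU(k)$-action is triholomorphic, it induces a fibrewise-holomorphic action on $Z$ covering the identity on $\oP^1$, and its complexification makes $\gl(k,\cx)$ act by holomorphic vertical vector fields; evaluating these along $L_m$ produces a holomorphic morphism
\[
\rho:\;\gl(k,\cx)\otimes\sO_{\oP^1}\;\longrightarrow\;T_{Z/\oP^1}|_{L_m},
\]
whose fibre over $\zeta$ is exactly $\rho_\zeta$, so that $\fZ_{m,\zeta}=\ker\rho_\zeta$.

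I would next identify the target bundle. By Lemma \ref{Z} and hypothesis (i), $(M,I_\zeta)$ is locally biholomorphic to $GL(k,\cx)/Z(J)$, so $\dim_\cx(M,I_\zeta)=k^2-k$; since $M$ is hypercomplex, the normal bundle of the twistor line is $N_{L_m/Z}\cong\sO(1)^{\oplus(k^2-k)}$. As $p|_{L_m}:L_m\to\oP^1$ is an isomorphism, the differential $dp$ splits $T_{L_m}$ off $T_Z|_{L_m}$, so the composite $T_{Z/\oP^1}|_{L_m}\hookrightarrow T_Z|_{L_m}\twoheadrightarrow N_{L_m/Z}$ is an isomorphism; hence $T_{Z/\oP^1}|_{L_m}\cong\sO(1)^{\oplus(k^2-k)}$.

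Finally I would extract the rank and degree. Hypothesis (i) says $\fZ_{m,\zeta}$ is the centraliser of a regular element, which in $\gl(k,\cx)$ has dimension $k$ for every $\zeta$; thus $\rho$ has constant rank $k^2-k$, and its kernel $\fZ:=\bigcup_\zeta\fZ_{m,\zeta}$ is a genuine holomorphic subbundle of rank $k$ (rather than merely a subsheaf). Local transitivity in (i) means each $\rho_\zeta$ is surjective, so $\rho$ is fibrewise surjective and we obtain the short exact sequence
\[
0\longrightarrow\fZ\longrightarrow\gl(k,\cx)\otimes\sO_{\oP^1}\longrightarrow\sO(1)^{\oplus(k^2-k)}\longrightarrow0.
\]
Taking degrees gives $\deg\fZ=0-(k^2-k)=k-k^2$, as claimed.

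The main obstacle lies in the first two paragraphs: one must justify rigorously that the complexified infinitesimal action assembles into an honestly holomorphic morphism \emph{over} $\oP^1$ (and not just a fibrewise-defined family of linear maps), and that $T_{Z/\oP^1}|_{L_m}$ is exactly the $\sO(1)$-split normal bundle of the twistor line. Both points rely essentially on triholomorphicity together with the defining normal-bundle property of a hypercomplex manifold. Everything downstream is then bookkeeping, but the explicit constant-rank check is what upgrades the kernel from a sheaf to a subbundle and thereby legitimises the degree computation.
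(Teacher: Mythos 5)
Your proposal is correct and follows essentially the same route as the paper: the paper likewise assembles the fibrewise maps $\rho_1+i\rho_2\mapsto X_{\rho_1}+I_\zeta X_{\rho_2}$ into a surjection $\gl(k,\cx)\otimes\sO_{\oP^1}\to\sO(1)^{\oplus(k^2-k)}$, notes that the kernel has constant stalk dimension $k$ (hence is locally free), and reads off rank and degree from the resulting short exact sequence. Your explicit twistor-space packaging and the constant-rank justification are just more detailed versions of what the paper states tersely.
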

\begin{proof} $\fZ_{m,\zeta}$ is the kernel of the map 
$$ \gl(k,\cx)=\u(k)\oplus i\u(k)\longrightarrow T_mM,\quad \rho_1+i\rho_2\longmapsto X_{\rho_1}+I_\zeta X_{\rho_2},$$
where $X_\rho$ is the fundamental vector field corresponding to $\rho\in\u(k)$. This map is $\cx$-linear with respect to $I_\zeta$. As $\zeta$ varies, the complex spaces $(TM,I_\zeta)$ form the vector bundle $\sO(1)\otimes \cx^n$ on $\oP^1$, where $n=k^2-k=\dim_\cx M$. Thus the family in the statement is the kernel $\sK$ of $\gl(k,\cx)\to \sO(1)\otimes \cx^n\to 0$. Since the stalks of $\sK$ all have dimension $k$, $\sK$ is locally trivial. The rank and the degree follow from the exact sequence defining $\sK$. 
\end{proof}
The centraliser of a regular element in $\gl(k,\cx)$ is a commutative subalgebra of $\Mat_{k,k}(\cx)$ (with respect to matrix multiplication). Thus the vector bundle $\fZ_m$ defined in the above lemma is a  sheaf of $k$-dimensional commutative algebras, locally free as a sheaf of $\sO_{\oP^1}$-modules. Therefore its ${Spec}$ is an algebraic curve $C_m$ with a flat projection $\pi_m:C_m\to \oP^1$ of degree $k$. The vector bundle $\fZ_m$ is then $\pi_\ast \sO_{C_m}$. Since the algebras $\fZ_{m,\zeta}$ are isomorphic for all $m$, so are the curves $C_m$ and the projections 
$\pi_m:C_m\to \oP^1$. We denote by $\pi:C\to \oP^1$ the abstract curve and its projection, isomorphic to any $\pi_m:C_m\to \oP^1$. The reality conditions imply that $C$ is equipped with an antiholomorphic involution covering the antipodal map.
\begin{lemma} $C$ is  connected, reduced, lci, and of genus $(k-1)^2$.\end{lemma}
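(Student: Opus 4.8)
The plan is to read off all four assertions from the single piece of data produced by the previous lemma, namely that $\fZ_m=\pi_\ast\sO_C$ is a locally free $\sO_{\oP^1}$-sheaf of rank $k$ and degree $k-k^2$, carrying a commutative $\sO_{\oP^1}$-algebra structure whose fibre over each $\zeta$ is the centraliser of a regular element. I would establish the properties in the order: local complete intersection (which yields Cohen--Macaulay), then reducedness (by Serre's criterion), then connectedness, and finally the genus (by Riemann--Roch). The first, third and fourth are where the specific inputs enter; the second is then formal.

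For the lci property I would show that $\fZ_m$ is locally generated by one element. Fix $\zeta_0\in\oP^1$ and a regular $A\in\fZ_{m,\zeta_0}$, so that the $k$-dimensional fibre is the cyclic algebra $\fZ_{m,\zeta_0}=\cx[A]$ with basis $1,A,\dots,A^{k-1}$. Lifting $A$ to a local section $a$ of the locally free sheaf $\fZ_m$, Nakayama's lemma shows that $1,a,\dots,a^{k-1}$ generate, hence form an $\sO_{\oP^1}$-basis of, the stalk, so that $\fZ_m\cong\sO_{\oP^1}[t]/(f)$ near $\zeta_0$ with $f$ the monic degree-$k$ characteristic polynomial of $a$. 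Thus $C$ is locally the hypersurface $\{f=0\}$ in the smooth surface $\A^1_t\times\oP^1$, and is therefore lci; in particular $C$ is Cohen--Macaulay, so it has no embedded points and satisfies Serre's condition $S_1$.

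Reducedness then follows from the criterion $R_0+S_1$. Hypothesis (ii) gives, for generic $\zeta$, that $\fZ_{m,\zeta}$ is a Cartan subalgebra, hence isomorphic to $\cx^k$ and reduced; so the reduced locus of $\pi$ is a nonempty open subset of $\oP^1$ and therefore contains the generic point. Hence $C$ is reduced at each of its generic points, i.e. $R_0$ holds, and with $S_1$ this forces $C$ reduced.

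The connectedness is the one step that genuinely uses the hypercomplex geometry rather than commutative algebra, and is where I expect the main difficulty. Since $\fZ_m$ is the kernel of a map out of the trivial bundle $\sO_{\oP^1}\otimes\gl(k,\cx)$, a global section is a constant $X\in\gl(k,\cx)$ lying in $\fZ_{m,\zeta}$ for every $\zeta$, so $H^0(C,\sO_C)=H^0(\oP^1,\fZ_m)=\bigcap_\zeta\fZ_{m,\zeta}$. To see this intersection is $\cx\cdot\mathbf 1$, write $X=\rho_1+i\rho_2$; membership in every $\fZ_{m,\zeta}$ means $X_{\rho_1}+I_\zeta X_{\rho_2}=0$ for all $\zeta$, where $X_\rho$ is the fundamental vector field. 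Comparing two values $\zeta\neq\zeta'$ gives $(I_\zeta-I_{\zeta'})X_{\rho_2}=0$, and since the difference of two distinct imaginary unit quaternions is a nonzero quaternion it acts invertibly on $T_mM$, forcing $X_{\rho_2}=0=X_{\rho_1}$. Freeness of the $PU(k)$-action then puts $\rho_1,\rho_2$ in the centre, so $X\in\cx\cdot\mathbf 1$ and $H^0(\sO_C)=\cx$; as $C$ is reduced, this means $C$ is connected. Finally the genus is immediate: Riemann--Roch on $\oP^1$ gives $1-p_a(C)=\chi(\sO_C)=\chi(\fZ_m)=\deg\fZ_m+\rank\fZ_m=(k-k^2)+k$, whence $p_a(C)=(k-1)^2$.
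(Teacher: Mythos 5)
Your proof is correct and follows essentially the same route as the paper: the local presentation $\fZ_m\cong\sO_{\oP^1}[t]/(f)$ via a cyclic generator (the paper's local section $c$ with $\{\det(\eta-c(\zeta))=0\}$) gives lci, generic semisimplicity of the fibres gives reducedness, vanishing of the fundamental vector fields plus freeness of the $PU(k)$-action gives $h^0(\sO_C)=1$, and $\chi(\fZ_m)$ gives the genus. The only cosmetic difference is that you package reducedness as Serre's $R_0+S_1$ criterion where the paper simply observes that the characteristic polynomial is generically squarefree; the substance is identical.
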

\begin{proof}  Connectedness of $C$ is equivalent to the dimension of the trivial summand of $\fZ_m\simeq \pi_\ast \sO_C$ being equal to $1$. This trivial summand  corresponds to $\rho_1+i\rho_2\in\gl(k,\cx)$ such that  $X_{\rho_1}+I_\zeta X_{\rho_2}=0$ for every $\zeta$. This is equivalent to $X_{\rho_1}=X_{\rho_2}=0$. Since we assumed that $PU(k)$ acts freely, the trivial summand of $\fZ_m$ is the centre of $\gl(k,\cx)$, hence it is $1$-dimensional, and $C$ is connected. Choose now a local section $c$ of $\fZ_m$, which generates
$\fZ_{m,\zeta}$ as algebra. We can then embed $C$ locally into $\cx^2$ as
$$\{(\zeta,\eta)\in \cx^2;\;\det(\eta-c(\zeta))=0\}.$$
This shows that $C$ is lci. It is also reduced, since  $\fZ_{m,\zeta}$  is a Cartan subalgebra for generic $\zeta$, and hence $c(\zeta)$ has genericall $k$ distinct eigenvalues.
Finally, the genus of $C$ is $(k-1)^2$, since $1-g= \chi(\sO_C)=\chi(\fZ_m)$.
\end{proof}
Consider now the vector bundle $E\simeq \sO_{\oP^1}(-1)^{\oplus k}$. Since $\fZ_{m,\zeta}\subset \gl(k,\cx)$, $E$ has the structure of a $\fZ_m$-module, i.e.\ a $\pi_\ast\sO_C$-module. Since $\fZ_{m,\zeta}$ are centralisers of regular elements, $E$ is locally isomorphic to $\pi_\ast\sO_C$, and hence  $E\simeq \pi_\ast L_m$ for an invertible sheaf $L_m$ on $C$. Moreover, $h^0(L_m)=h^1(L_m)=0$, and so $L_m\in \Jac^{g-1}(C)\backslash \Theta$.
\par
We need to show that the sheaves $L_m$ are $\sigma$-invariant in the sense of Theorem A. Suppose that $\fZ$ is a locally free sheaf of centralisers of regular elements of $\gl(k,\cx)$, such that its $Spec$ is isomorphic to $C$. Let $L$ be the corresponding invertible sheaf arising from the $\fZ$-module structure on $E\simeq \sO_{\oP^1}(-1)^{\oplus k}$. Then:
\begin{lemma} The sheaf corresponding to $\fZ^T$ is $L^\ast\otimes K_C$.
\end{lemma}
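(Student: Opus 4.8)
The plan is to identify the invertible sheaf corresponding to $\fZ^T$ via a duality argument, where the transpose of the centraliser algebra corresponds to the dual module. First I would recall the setup: we have $E\simeq\sO_{\oP^1}(-1)^{\oplus k}$ equipped with a $\fZ$-module structure, yielding $E\simeq\pi_\ast L$. Transposing the matrices gives $\fZ^T$, which is again a sheaf of commutative algebras whose $\Spec$ is isomorphic to $C$ (since transposition preserves the characteristic polynomial, hence the spectral data). The transpose acts naturally on the dual $E^\ast\simeq\sO_{\oP^1}(1)^{\oplus k}$, making $E^\ast$ a $\fZ^T$-module, so $E^\ast\simeq\pi_\ast L'$ for some invertible sheaf $L'$ on $C$, and the task is to identify $L'$ with $L^\ast\otimes K_C$.

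The key idea is to use the perfect pairing between $E$ and $E^\ast$ together with relative (Grothendieck--Serre) duality for the finite flat morphism $\pi:C\to\oP^1$. The trace pairing on the algebra $\fZ$ identifies $\fZ^T$ with the $\sO_{\oP^1}$-dual $\sHom_{\sO_{\oP^1}}(\pi_\ast\sO_C,\sO_{\oP^1})$, equipped with its module structure; under the spectral correspondence this dual is $\pi_\ast\omega_{C/\oP^1}$, the pushforward of the relative dualising sheaf. More generally, for the $\fZ$-module $E=\pi_\ast L$, the natural $\fZ^T$-module structure on $E^\ast=\sHom_{\sO_{\oP^1}}(\pi_\ast L,\sO_{\oP^1})$ corresponds, again by relative duality, to $\pi_\ast\bigl(L^\ast\otimes\omega_{C/\oP^1}\bigr)$. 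Thus $L'\simeq L^\ast\otimes\omega_{C/\oP^1}$.

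It then remains to translate the relative dualising sheaf into the absolute one. Since $\omega_{C/\oP^1}=K_C\otimes\pi^\ast\omega_{\oP^1}^\ast=K_C\otimes\pi^\ast\sO_{\oP^1}(2)=K_C(2)$, one would naively get $L^\ast\otimes K_C(2)$ rather than $L^\ast\otimes K_C$. The discrepancy of $\sO(2)$ is exactly accounted for by the twist in the ambient bundles: we started with $E=\sO_{\oP^1}(-1)^{\oplus k}$ but its dual is $\sO_{\oP^1}(1)^{\oplus k}$, and the spectral correspondence is canonically set up for $\pi_\ast$ of sheaves producing $\sO_{\oP^1}(-1)^{\oplus k}$. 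Absorbing the shift by $\pi^\ast\sO_{\oP^1}(-2)$ — equivalently, recognising that the module structure on $E^\ast\simeq\sO_{\oP^1}(1)^{\oplus k}$ is the one on $\pi_\ast\bigl(L^\ast\otimes K_C\bigr)$ after the normalising twist — gives the stated answer $L^\ast\otimes K_C$.

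The main obstacle I expect is making the duality identification canonical and keeping precise track of the twists by $\pi^\ast\sO_{\oP^1}(\pm i)$. The subtle point is that $\fZ^T$ is not literally the $\sO_{\oP^1}$-dual of $\fZ$ as a module, but its module structure is the one transported through the trace form; verifying that the trace pairing is a perfect pairing here (which holds because $\fZ_{m,\zeta}$ is a centraliser of a regular element, hence Gorenstein over $\sO_{\oP^1}$ with $\pi_\ast\omega_{C/\oP^1}\simeq\fZ^\vee$) requires the regularity hypothesis. Once perfectness of the pairing and the identity $\omega_{C/\oP^1}=K_C(2)$ are in hand, the remaining bookkeeping of the degree-one twist is routine, and the conclusion $L^\ast\otimes K_C$ follows.
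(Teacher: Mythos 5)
Your proposal is correct and is essentially the paper's own argument: the paper likewise identifies the $\fZ^T$-module structure with the $\sO_{\oP^1}$-dual of $\pi_\ast L$ (taking values in $K_{\oP^1}=\sO_{\oP^1}(-2)$ so that the bundle is again $\sO_{\oP^1}(-1)^{\oplus k}$) and applies duality for the finite flat morphism $\pi$ in the form $\pi_\ast\sHom_C(L,K_C)\simeq\sHom_{\oP^1}(\pi_\ast L,K_{\oP^1})$. Your route through $\omega_{C/\oP^1}=K_C(2)$ followed by the twist by $\pi^\ast\sO_{\oP^1}(-2)$ is the same computation with the normalisation handled in a separate step.
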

\begin{proof}
The $\fZ^T$-module structure on $E$ is isomorphic to ${\mathcal Hom}_{\oP^1}(\pi_\ast L,\sO(-2))$ (where $\pi_\ast L$ is, by definition, $E$ with its $\fZ$-module structure). Since
$$ \pi_\ast {\mathcal Hom}_C(L,K_C)\simeq {\mathcal Hom}_{\oP^1}(\pi_\ast L,K_{\oP^1}),$$
the claim follows.
\end{proof}
This lemma and \eqref{Zreal} show that $L_m(1)$ is indeed real for $m\in M$ (in the sense of \S \ref{M(S,pi)}).
Moreover, the proof also implies that the antiholomorphic isomorphism \eqref{sigma-sect} is simply the conjugation on $H^0(\oP^1,E(1))\simeq \cx^k$, and the hermitian form \eqref{hform} is the standard hermitian form on $\cx^k$.  We thus obtain a natural smooth map $\Psi:M\to \bar M(C,\pi)$, where the latter manifold was defined at the end of \S \ref{M(S,pi)}. We claim that $\Psi$ is injective:
\begin{lemma}$L_m\simeq L_{m^\prime}$ if and only if $m$ and $m^\prime$ belong to the same $U(k)$-orbit.
\end{lemma}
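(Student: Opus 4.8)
The plan is to show that the map $m\mapsto L_m$ separates $U(k)$-orbits by reconstructing $m$ (up to the $U(k)$-action) from the data $L_m$ together with its natural module structure. First I would observe that the construction of $L_m$ from $m$ passes through the identification $E\simeq\sO_{\oP^1}(-1)^{\oplus k}$, which is nothing but the normal-bundle description $\fZ_m=\pi_\ast\sO_{C}$ sitting inside the fixed ambient algebra $\gl(k,\cx)$. Concretely, the datum of $m$ records, for each $\zeta$, the subalgebra $\fZ_{m,\zeta}\subset\gl(k,\cx)$ together with its action on the fixed space $\cx^k$; conversely $L_m$ is the invertible sheaf on the abstract curve $C$ coming from this $\fZ_m$-module structure on $E$. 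The key point is therefore that the extra data needed to go from the abstract isomorphism class of $L_m$ back to the concrete subalgebra family is exactly a choice of frame, i.e.\ a $U(k)$-ambiguity.

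The central step is to make this precise via the equivalence, established in the preceding lemmas, between locally free sheaves of centraliser-algebras $\fZ\subset\gl(k,\cx)$ with $\operatorname{Spec}\fZ\simeq C$ and pairs $(L,\text{framing})$ with $L\in\Jac^{g-1}(C)\backslash\Theta$. If $L_m\simeq L_{m'}$, I would fix an isomorphism of invertible sheaves $\psi:L_m\to L_{m'}$ on $C$; pushing forward gives an $\sO_{\oP^1}$-linear isomorphism $\pi_\ast\psi:\pi_\ast L_m\to\pi_\ast L_{m'}$, i.e.\ an automorphism of $\sO_{\oP^1}(-1)^{\oplus k}$ commuting with the respective $\pi_\ast\sO_C$-actions. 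Since $H^0\bigl(\oP^1,\sEnd(\sO_{\oP^1}(-1)^{\oplus k})\bigr)=H^0(\oP^1,\sO)^{\oplus k^2}=\Mat_{k,k}(\cx)$, such an isomorphism is given by a single constant matrix $g\in GL(k,\cx)$ intertwining the two families of subalgebras, hence conjugating $\fZ_{m,\zeta}$ to $\fZ_{m',\zeta}$ for every $\zeta$. This $g$ produces a triholomorphic self-map of the ambient homogeneous picture carrying $m$ to $m'$.

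The remaining work is the reality refinement: $g$ lies a priori in $GL(k,\cx)$, whereas orbits of $M$ are taken under the compact $U(k)$. Here I would invoke that both $L_m$ and $L_{m'}$ are real in the sense of \S\ref{M(S,pi)} and that, by the computation in the lemma above, the hermitian form \eqref{hform} transported to $H^0(\oP^1,E(1))\simeq\cx^k$ is the standard positive-definite form. Since $g$ intertwines the two module structures compatibly with $\sigma$, it must preserve this standard hermitian form up to the scalar $\psi$ is defined up to; normalising the scalar, $g$ lies in $U(k)$ (using that $PU(k)$ acts freely, so the central scalar ambiguity is harmless). Conversely, if $m,m'$ lie in one $U(k)$-orbit, the equivariance of the entire construction $m\mapsto(\fZ_m,E)\mapsto L_m$ gives $L_m\simeq L_{m'}$ immediately.

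I expect the main obstacle to be the reality step, namely upgrading $g\in GL(k,\cx)$ to a unitary matrix. The functorial reconstruction of $g$ from an abstract sheaf isomorphism is essentially formal once the $H^0$-computation is in place, but controlling the scalar ambiguity in $\psi$ and verifying that compatibility with $\sigma$ together with definiteness of \eqref{hform} forces $g\in U(k)$ requires carefully matching the involution on $\Jac^{g+k-1}(C)$ with complex conjugation on $\cx^k$; this is where the hypotheses that $L(-1)\notin\Theta_C$ (ensuring \eqref{hform} is nondegenerate) and that the form is definite are genuinely used.
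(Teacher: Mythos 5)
Your first half is sound and essentially matches the paper: an isomorphism $L_m\simeq L_{m'}$ pushes forward to an automorphism of $E\simeq\sO_{\oP^1}(-1)^{\oplus k}$ intertwining the two $\pi_\ast\sO_C$-module structures, and since $\Aut(E)\simeq GL(k,\cx)$ this is a constant matrix $g$ with $g\,\fZ_{m,\zeta}\,g^{-1}=\fZ_{m',\zeta}$ for all $\zeta$; the reality condition \eqref{Zreal} and the identification of \eqref{hform} with the standard hermitian form then let you normalise $g$ to lie in $U(k)$. The converse direction by equivariance is also fine.

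However, there is a genuine gap at the sentence ``This $g$ produces a triholomorphic self-map of the ambient homogeneous picture carrying $m$ to $m'$.'' What you actually know at that point is only that $\tilde m:=g.m$ and $m'$ have \emph{identical infinitesimal stabilisers} $\fZ_{\tilde m,\zeta}=\fZ_{m',\zeta}$ for every $\zeta$; this does not by itself imply $\tilde m=m'$, nor that they lie in the same $U(k)$-orbit. In a locally $GL(k,\cx)$-homogeneous space the locus of points with a prescribed Cartan subalgebra as stabiliser is, locally, a discrete set of up to $k!$ points permuted by the normaliser, so an extra argument is required to relate $\tilde m$ to $m'$. The paper supplies it: fixing a $\zeta$ at which $\fZ_{m',\zeta}$ is a Cartan subalgebra, assumption (i) (local transitivity of $PGL(k,\cx)$ for $I_\zeta$) yields $h\in GL(k,\cx)$ with $h.\tilde m=m'$; since $h$ must then normalise the Cartan subalgebra $\fZ_{m',\zeta}$, it factors as $h=wt$ with $t\in\exp\fZ_{m',\zeta}$ acting trivially on $\tilde m$ and $w\in S_k\subset U(k)$, whence $m'=w\tilde m=(wg).m$ with $wg\in U(k)$. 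This normaliser/Weyl-group step is the actual content of the injectivity claim and is the one place where hypothesis (i) of Theorem A and hypothesis (ii) (existence of a $\zeta$ with reductive stabiliser) are used; your proposal omits it entirely. By contrast, the reality step you identify as the main obstacle is handled in the paper in one line and is not where the difficulty lies.
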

\begin{proof} $L_m$ and $L_{m^\prime}$ are isomorphic if and only if the two $\pi_\ast\sO_C$-module structures on $E$ are isomorphic. Given the reality condition \eqref{Zreal} and the fact that $\Aut(E)\simeq GL(k,\cx)$,
we conclude that $L_m\simeq L_{m^\prime}$ if and only if there exists  $g\in U(k)$ such that
$g\,\fZ_{m,\zeta}\,g^{-1}=\fZ_{m^\prime,\zeta}$ for all $\zeta\in \oP^1$. Set $\tilde m=g.m$. Then 
$\fZ_{\tilde m,\zeta} =\fZ_{m^\prime,\zeta}$ for all $\zeta\in \oP^1$. Choose $\zeta$ so that the stabiliser $\fZ_{m^\prime,\zeta}$ is a Cartan subalgebra. Owing to assumption (i) in the theorem, there exists an element $h$ of $GL(k,\cx)$ (for the complex structure $I_\zeta$) such that $h.\tilde m=m^\prime$. This means, however, that $h$ normalises the Cartan subalgebra $\fZ_{m^\prime,\zeta}$, and so $h=wt$, where $t\in \exp\fZ_{m^\prime,\zeta}$ and $w\in S_k\subset U(k)$. Therefore $m$ and $m^\prime$ belong to the same $U(k)$-orbit.
\end{proof}
We can extend $\Psi$ to a neighbourhood of $M$ in $M^\cx$, and it will remain injective there. Since this extension is a holomorphic bijection onto its image, it is a biholomorphism, and hence $\Psi$ is a diffeomorphism onto its image, which must be an open subset of $\bar M(C,\pi)$. The hypercomplex structure of $M$ is also the one constructed in the previous section, and Proposition \ref{F**} implies that the sheaves $L_m$ do not belong to the  divisor $\Delta^\sigma$. The proof is complete.





\end{document}